\theoremstyle{plain}
\newtheorem{theorem}{Theorem}[section]
\newtheorem{lemma}[theorem]{Lemma}
\theoremstyle{definition}
\newtheorem{define}{Definition}[section]
\newtheorem{problem}{Problem}
\newtheorem{example}{Example}[section]
\theoremstyle{remark}
\newtheorem{remark}{Remark}[section]
\newtheorem*{acknowledgement}{Acknowledgement}
\begin{document}

\date\today

\title[The boundary behaviour of the squeezing function]{On the boundary behaviour of the squeezing function near linearly convex boundary points}
\author{Ninh Van Thu, Nguyen Thi Lan Huong and Nguyen Quang Dieu\textit{$^{1,2}$}}

\address{Ninh Van Thu}
\address{ School of Applied Mathematics and Informatics, Hanoi University of Science and Technology, No. 1 Dai Co Viet, Hai Ba Trung, Hanoi, Vietnam}
\email{thu.ninhvan@hust.edu.vn}

\address{Nguyen Thi Lan Huong}
\address{Department of Mathematics\\ Hanoi University of Mining and Geology\\ 18 Pho Vien, Bac Tu Liem, Hanoi, Vietnam}
\email{nguyenlanhuong@humg.edu.vn}

\address{Nguyen Quang Dieu}
\address{\textit{$^{1}$}~Department of Mathematics, Hanoi National University of Education, 136 Xuan Thuy, Cau Giay, Hanoi, Vietnam}
 \address{\textit{$^{2}$}~Thang Long Institute of Mathematics and Applied Sciences,
Nghiem Xuan Yem, Hoang Mai, HaNoi, Vietnam}
\email{ngquang.dieu@hnue.edu.vn}


\subjclass[2020]{Primary 32H02; Secondary 32M05, 32F18.}
\keywords{Hyperbolic complex manifold, exhausting sequence, $h$-extendible domain}

\begin{abstract}
The purpose of this article is twofold. The first aim is to prove that if there exist a sequence $\{\varphi_j\}\subset \mathrm{Aut}(\Omega)$ and  $a\in \Omega$ such that $\lim_{j\to\infty}\varphi_j(a)=\xi_0$ and $\lim_{j\to\infty}\sigma_\Omega(\varphi_j(a))=1$, where $\xi_0$ is a linearly convex boundary point of finite type, then $\xi_0$ must be strongly pseudoconvex. Then, the second aim is to investigate the boundary behaviour of the squeezing function of a general ellipsoid. 
\end{abstract}
\maketitle

\section{introduction}

 Let $\Omega$ be a domain in $\mathbb C^n$ and $p \in \Omega$. For a holomorphic embedding $f\colon \Omega \to \mathbb B^n:=\mathbb B(0;1)$ with $f(p)=0$, we set
$$
\sigma_{\Omega,f}(p):=\sup\left \{r>0\colon B(0;r)\subset f(\Omega)\right\},
$$
where $\mathbb B^n (z;r)\subset\mathbb{C}^n$ denotes the ball of radius $r$ with center at $z$. Then the \textit{squeezing function} $\sigma_{\Omega}: \Omega\to\mathbb R$ is defined as
$$
\sigma_{\Omega}(p):=\sup_{f} \left\{\sigma_{\Omega,f}(p)\right\}.
$$
(See Definition in \cite{DGF12}.) Note that the squeezing function is invariant under biholomorphisms and $0 < \sigma_{\Omega}(z)\leq 1$ for any $z \in \Omega$. Moreover, by definition one sees that $\Omega$ is biholomorphically equivalent to the unit ball $\mathbb B^n$ if $\sigma_{\Omega}(z)=1$ for some $z \in \Omega$.

It is well-known that if $p$ is a strongly pseudoconvex boundary point, then $\lim\limits_{\Omega \ni z\to p\in \partial \Omega}\sigma_{\Omega}(z)=1$ (cf. \cite{DGF16, DFW14, KZ16}). Conversely, motivated by Problem $4.1$ in \cite{FW18}, let us consider the following problem.
\begin{problem}
If $\Omega$ is a bounded pseudoconvex domain with smooth boundary, and if $\lim\limits_{j\to\infty}\sigma_{\Omega}(\eta_j)=1$ for some sequence $\{\eta_j\}\subset \Omega$ converging to $p\in \partial \Omega$, then is the boundary of $\Omega$ strongly pseudoconvex at $p$?
\end{problem}

In the case that $\partial\Omega$ is pseudoconvex of D'Angelo finite type near $\xi_0$, the answer to this problem is affirmative for the following cases:
\begin{itemize}
\item $\{\eta_j\}\subset \Omega$ converges to $\xi_0$ along the inner normal line to $\partial\Omega$ at $\xi_0$ (for details, see \cite{JK18} for $n=2$ and \cite{MV19} for general case).
\item $\{\eta_j\}\subset \Omega$ converges nontangentially to $\xi_0$ (see \cite{Ni18}).
\item $\{\eta_j\}\subset \Omega$ converges $\left(\frac{1}{m_1},\ldots, \frac{1}{m_{n-1}}\right)$-nontangentially to an $h$-extendible boundary point $\xi_0$ (see \cite[Definition $3.4$]{NN19}), where $(1, m_1, \ldots, m_{n-1})$ is the \emph{multitype of $\partial\Omega$ at $\xi_0$} and the \emph{$h$-extendiblility at $\xi_0$} means that the Catlin multitype and D'Angelo multitype of $\partial\Omega$ at $\xi_0$ coincide (see \cite[Definition $3.3$]{Yu95}). 
\end{itemize}

Now we consider the case that $\{\eta_j\}\subset \Omega$ be a sequence converging $\left(\frac{1}{m_1},\ldots, \frac{1}{m_{n-1}}\right)$-nontangentially to $\xi_0$. Then, the condition that $\lim\limits_{j\to \infty} \sigma_\Omega(\eta_j)=1$ ensures that the unit ball $\mathbb B^n$ is biholomorphically equivalent to some model $M_P$ given by
$$
M_P=\left\{z\in \mathbb C^n\colon \mathrm{Re}(z_n)+P(z')<1\right\},
$$
where $P$ is a $\left(\frac{1}{m_1},\ldots, \frac{1}{m_{n-1}}\right)$-homogeneous polynomial on $\mathbb C^{n-1}$ (see \cite[Definition $3.1$]{Yu95}). Therefore, $m_1=m_2=\cdots=m_{n-1}=1$, or $\xi_0$ is strongly pseudoconvex (\cite{NN19}). Unfortunately, the point $\xi_0$ may not be strongly psudoconvex when $\{\eta_j\}\subset \Omega$ does not converge $\left(\frac{1}{m_1},\ldots, \frac{1}{m_{n-1}}\right)$-nontangentially to $\xi_0$. For instance, the following example points out that $\lim\limits_{j\to \infty}\sigma_{\Omega}(\eta_j)=1$ for some $\{\eta_j\}\subset \Omega$ converging to a weakly pseudoconvex boundary point (see also Example \ref{ex2} for general case).
 \begin{example}\label{ex1} Let $E_{1,2}:=\{(z_1,z_2)\in \mathbb C^2\colon |z_2|^2+|z_1|^4<1\}$. Consider the sequence $\displaystyle a_n=\Big(\sqrt[4]{\frac{2}{n}-\frac{2}{n^2}}, 1-\frac{1}{n}\Big)\to (0,1)$ as $n\to \infty$. Denote by $\rho(z):=|z_2|^2-1+|z_1|^4$ a defining function for $E_{1,2}$ and denote by $\sigma(z_1)=|z_1|^4$ a $(\frac{1}{4})$-weighted homogeneous polynomial. Then, a computation shows that 
\begin{align*}
\rho(a_n)=\Big |1-\frac{1}{n}\Big |^2-1+\Big|\sqrt[4]{\frac{2}{n}-\frac{2}{n^2}}\Big  |^4= -\frac{2}{n}+\frac{1}{n^2}+\frac{2}{n}-\frac{2}{n^2}=-\dfrac{1}{n^2}<0
\end{align*}
Therefore, $\displaystyle \mathrm{dist}(a_n,\partial E_{1,2})\approx |\rho(a_n)|=\frac{1}{n^2}$, $\displaystyle |\mathrm{Re}(a_{n2})-1|=\Big|-\frac{1}{n}\Big|=\frac{1}{n}$, and $\displaystyle \sigma(a_{n1})=\sigma(\sqrt[4]{\frac{2}{n}-\frac{2}{n^2}})=\left(\sqrt[4]{\frac{2}{n}-\frac{2}{n^2}}\right)^4=\frac{2}{n}-\frac{2}{n^2}\approx \frac{2}{n}$. This implies that $\{a_n\}$ does not converge $(\frac{1}{4})$-nontangentially to the boundary point $p=(0,1)$.

Let us consider the automorphism $\psi_n\in \mathrm{Aut}(E_{1,2})$, given by
$$
\psi_n(z)=\left( \frac{(1-|a_{n2}|^2)^{1/4}}{(1-\bar{a}_{n2}z_2)^{1/2}} z_1,   \frac{z_2-a_{n2}}{1-\bar{a}_{n2} z_2}\right),
$$
and hence $\psi_n(a_n)=(b_n,0)$, where $b_n=   \dfrac{a_{n1}}{(1-|a_{n2}|^2)^{1/4}}=\dfrac{\sqrt[4]{\frac{2}{n}-\frac{2}{n^2}}}{\sqrt[4]{\frac{2}{n}-\frac{1}{n^2}}}\to 1$ as $n\to \infty$. Since $\psi_n(a_n)$ converges to the strongly pseudoconvex boundary point $(1,0)$ of $\partial E_{1,2}$, it follows that $\sigma_{E_{1,2}}(a_n)=\sigma_{E_{1,2}}(\psi_n(a_n)) \to 1$ as $n\to\infty$. However, the point $(0,1)$ is weakly pseudoconvex. \hfill $\Box$
\end{example}

To give a statement of our result, let us recall that $\partial \Omega$ is {\bf linearly convex near $\xi_0\in\partial \Omega$} if there exists a neighbourhood $U$ of $\xi_0$ such that,  for all $z\in \partial \Omega\cap U,$ 
the intersection
$$
(z+T^{10}_z\partial \Omega)\cap (\Omega\cap U)=\emptyset.
$$
In \cite{Ni09}, the first author proved a characterization of linearly convex domains in $\mathbb C^n$ by their noncompact automorphism groups. 

The first aim of this paper is the following theorem.
\begin{theorem}\label{main thm1}
Let $\Omega$ be a bounded domain in $\mathbb C^n$ with smooth pseudoconvex boundary. Assume that $\xi_0$ is a boundary point of $\Omega$ of D'Angelo finite type such that $\partial \Omega$ is linearly convex at $\xi_{0}$ and there exists a sequence $\{\varphi_j\}\subset \mathrm{Aut}(\Omega)$ such that $\eta_j:=\varphi_j(a)\to\xi_0$ as $j\to \infty$ for some $a\in \Omega$.  If $\lim\limits_{j\to\infty}\sigma_{\Omega}(\eta_j)=1$, then $\partial \Omega$ is strongly pseudoconvex at $\xi_0$.
\end{theorem}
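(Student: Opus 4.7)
The crucial first observation is that $\sigma_\Omega$ is biholomorphically invariant, so for every $j$ one has $\sigma_\Omega(a) = \sigma_\Omega(\varphi_j(a)) = \sigma_\Omega(\eta_j)$. Since $\sigma_\Omega(a)$ does not depend on $j$, passing to the limit forces $\sigma_\Omega(a)=1$, and by the property recalled in the introduction $\Omega$ itself is already biholomorphic to $\mathbb{B}^n$. The theorem therefore reduces to the purely geometric assertion that if $\Omega\cong\mathbb{B}^n$ admits a linearly convex, D'Angelo finite type boundary orbit accumulation point $\xi_0$, then $\xi_0$ must be strongly pseudoconvex.

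To prove this reduction I would apply a non-isotropic scaling construction at $\xi_0$ in the linearly convex finite type setting, in the spirit of Pinchuk, Bedford--Pinchuk, Berteloot, and Gaussier. Choose local holomorphic coordinates around $\xi_0$ adapted to the Catlin multitype $(1,m_1,\ldots,m_{n-1})$ and let $T_j$ be the anisotropic dilations associated to these weights; after composing with polynomial automorphisms $A_j$ of $\mathbb{C}^n$ that normalize the positions of $\eta_j$, the rescaled domains $T_j\circ A_j(\Omega)$ converge in the local Hausdorff sense to a weighted-homogeneous polynomial model
\[
M_P = \{z\in\mathbb{C}^n : \operatorname{Re}(z_n) + P(z') < 0\},
\]
where $P$ is a $\left(\tfrac{1}{m_1},\ldots,\tfrac{1}{m_{n-1}}\right)$-homogeneous plurisubharmonic polynomial without pluriharmonic terms. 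Linear convexity together with finite type guarantees that the scaling is well defined and that $\xi_0$ is $h$-extendible, so $M_P$ is Kobayashi hyperbolic and taut.

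The next step is to show $\Omega\cong M_P$. Set $\Phi_j := T_j\circ A_j\circ \varphi_j : \Omega\to\mathbb{C}^n$. By construction $\Phi_j(a)$ stays at a fixed interior base point of $M_P$, so $\{\Phi_j\}$ is locally uniformly bounded and a subsequence converges to a holomorphic $\Phi:\Omega\to\overline{M_P}$; running the same argument on the inverses $\Phi_j^{-1}$ defined on $T_j\circ A_j(\Omega)$ yields a candidate $\Psi:M_P\to\overline{\Omega}$. Tautness of $M_P$ and uniform Kobayashi estimates at the $h$-extendible linearly convex point $\xi_0$ prevent either limit from escaping into the boundary, and a Cartan/Hurwitz-type injectivity argument shows that $\Phi$ and $\Psi$ are mutually inverse biholomorphisms. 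Combining with the first paragraph yields $M_P\cong\mathbb{B}^n$, and the classification of weighted-homogeneous polynomial models biholomorphic to the ball then forces $P$ to be a positive definite Hermitian form, so $m_1=\cdots=m_{n-1}=1$. This is precisely the strong pseudoconvexity of $\partial\Omega$ at $\xi_0$.

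The hard part will be the third step: choosing the affine maps $A_j$ in a way compatible with the linearly convex geometry at $\xi_0$, and then verifying that the limits $\Phi$ and $\Psi$ do not collapse into the boundary but together assemble into a genuine biholomorphism. This requires the sharp Kobayashi metric estimates available for $h$-extendible linearly convex finite type points, together with a careful version of Frankel's rescaling argument to promote the subsequential holomorphic limits to mutually inverse biholomorphisms.
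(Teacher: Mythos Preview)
Your first paragraph contains a sharp observation that the paper does not exploit: biholomorphic invariance of $\sigma_\Omega$ gives $\sigma_\Omega(a)=\sigma_\Omega(\varphi_j(a))=\sigma_\Omega(\eta_j)$ for every $j$, so the hypothesis $\sigma_\Omega(\eta_j)\to 1$ already forces $\sigma_\Omega(a)=1$ and hence $\Omega\cong\mathbb{B}^n$ outright. The paper instead takes the long way around: it extracts from the squeezing hypothesis a sequence of embeddings $f_j:\Omega\to\mathbb{B}^n$ with $f_j(\eta_j)=0$ and $\mathbb{B}(0;1-\delta_j)\subset f_j(\Omega)$, runs a second scaling on the composites $\Gamma_j\circ f_j^{-1}$ (using normal-family results from \cite{DN09} and \cite{Ni09}) to produce a biholomorphism $\mathbb{B}^n\to M_P$, and only then combines this with $\Omega\cong M_P$ (quoted from \cite{Ni09}) to conclude $\Omega\cong\mathbb{B}^n$. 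Your shortcut removes that entire first scaling and the attendant normality lemmas.

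After that point the two arguments coincide: rescaling $\Omega$ along the orbit $\eta_j=\varphi_j(a)$ to a polynomial model $M_P$ and upgrading the subsequential limits of $\Phi_j$, $\Phi_j^{-1}$ to a biholomorphism $\Omega\cong M_P$ is exactly Theorem~1.1 of \cite{Ni09}, which the paper simply cites rather than re-proving. Both routes therefore end at $\mathbb{B}^n\cong\Omega\cong M_P$ and finish by the rigidity of models biholomorphic to the ball. One technical correction to your write-up: the scaling in \cite{Ni09} for linearly convex finite-type points is built from the extremal radii $\tau_k(\eta,\epsilon)$ described in Section~\ref{linearly-convex} of the paper, not from Catlin-multitype weights, and linear convexity together with finite type does not in general imply $h$-extendibility. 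This does not affect the logic---the output is still a polynomial model $M_P$ biholomorphic to $\Omega$---but you should either describe the dilations in those terms or, more simply, cite \cite{Ni09} directly as the paper does.
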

\begin{remark} Thanks to the linear convexity of $\partial \Omega$ near a boundary orbit accumulation point $\xi_0$ and the condition that $\lim\limits_{j\to\infty}\sigma_{\Omega}(\eta_j)=1$, the scaling method can be applied to implies that $\mathbb B^n$ is biholomorphically equivalent to a model $M_P$, where is a real nondegenerate plurisubharmonic polynomial of degree less than or equal to the type of $\partial \Omega$ at $\xi_0$. Moreover, since $\eta_j=\varphi_j(a)$ for some $\{\varphi_j\}\subset \mathrm{Aut}(\Omega)$ and $a\in \Omega$, the scaling method yields $\Omega$ is biholomorphically equivalent to a model $M_P$ (see \cite[Theorem $1.1$]{Ni09}), that is, $\Omega$ is biholomorphically equivalent to the unit ball $\mathbb B^n$. Consequently, the point $\xi_0$ is strongly pseudoconvex, as desired.
\end{remark}

Now we move to the second part of this paper. First of all, let us fix positive integers $m_1,\ldots, m_{n-1}$ and let $P(z')$ be a $(1/m_1,\ldots,1/m_{n-1})$-homogeneous polynomial given by 
\begin{equation*}\label{series expression}
P(z)=\sum_{wt(K)=wt(L)=1/2} a_{KL} {z'}^K  \bar{z}'^L,
\end{equation*}
where $a_{KL}\in \mathbb C$ with $a_{KL}=\bar{a}_{LK}$, satisfying that $P(z')>0$ whenever $z'\ne 0$. Here and in what follows, $z':=(z_1,\ldots,z_{n-1})$ and $ wt(K):=\sum_{j=1}^{n-1} \frac{k_j}{2m_j}$ denotes the weight of any multi-index $K=(k_1,\ldots,k_{n-1})\in \mathbb N^{n-1}$ with respect to $\Lambda:=(1/m_1,\ldots,1/m_{n-1})$. Then the general ellipsoid $D_P$ in $\mathbb C^{n}\;(n\geq1)$, defined in  \cite{NNTK19} by
\begin{equation*}
\begin{split}
D_P &:=\{(z',z_n)\in \mathbb C^{n}\colon |z_n|^2+P(z')<1\}.
\end{split}
\end{equation*}
Throughout this paper, we assume that the domain $D_P$ is a WB-domain, i.e.,  $D_P$ is strongly pseudoconvex at every boundary point outside the set $\{(0',e^{i\theta})\colon \theta\in \mathbb R\}$ (cf. \cite{AGK16}).

For any $s, r\in (0,1]$,  inspired by \cite[Lemma $2.5$]{Liu18}  let us define $D^s_P$ and $D^{s,r}_P$ respectively by
\begin{equation*}
	\begin{split}
D^s_P&:=\{z\in \mathbb C^n \colon |z_n-b|^2+sP(z')<s^2\};\\
D^{s,r}_P&:=\{z\in \mathbb C^n \colon |z_n-b|^2+\dfrac{s}{r}P(z')<s^2\},
\end{split}
\end{equation*}
where $s=1-b$. We note that $D_P^{s,1}=D_P^s$ and the property that $\lim \psi_j^{-1}(D^s_P)=D_P$ for a certain family $\{\psi_j\}\subset \mathrm{Aut}(D_P)$ (cf. Lemma \ref{exhaustion-ellipsoid} in Section \ref{squeezing}) plays a key role in the proofs of our main theorems below. 

Indeed, we prove the following theorem.

\begin{theorem}\label{maintheorem2} Let $\Omega$ be a  subdomain of $D_P$ such that $D^s_P\subset \Omega\subset D_P$ for some $s\in (0,1]$. Then, for any $r\in (0,1)$ there exists $ \gamma_0>0$ depending on $r$ such that 
	$$
	\sigma_\Omega(z)\geq \gamma_0,\; \forall \; z\in D^{s,r}_P\cap \Omega.
	$$ 
\end{theorem}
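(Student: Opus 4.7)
The plan is to apply a Möbius-type automorphism of $D_P$ (the analog of the one in Example \ref{ex1}) to move $p$ onto a canonical compact slice of $D_P$, and then to construct an explicit affine embedding of the image of $\Omega$ into $\mathbb{B}^n$. Fix $p = (p', c) \in D^{s,r}_P \cap \Omega$ with $c := p_n$, and consider the automorphism
\[
\psi_c(z) := \Bigl(\Bigl(\tfrac{1-|c|^2}{(1-\bar c\, z_n)^2}\Bigr)^{1/(2m_1)} z_1,\,\ldots,\,\Bigl(\tfrac{1-|c|^2}{(1-\bar c\, z_n)^2}\Bigr)^{1/(2m_{n-1})} z_{n-1},\,\tfrac{z_n-c}{1-\bar c\, z_n}\Bigr) \in \mathrm{Aut}(D_P).
\]
Then $q := \psi_c(p) = (q',0)$, and the weighted homogeneity of $P$ yields $P(q') = P(p')/(1-|c|^2)$.

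The first step will be to show that $q$ lies in a compact set depending only on $r$. Starting from $|c-b|^2 + (s/r) P(p') < s^2$ with $b = 1-s$, a short algebraic manipulation (in which the difference $r(1-|c|^2) - r(s - |c-b|^2/s)$ works out to a nonnegative multiple of $|1-c|^2$) yields $P(p') < r(1-|c|^2)$, so $P(q') < r$ and
\[
q \,\in\, K_r \;:=\; \{(w',0) : P(w') \le r\} \,\Subset\, D_P.
\]

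The main technical step is then to produce a Euclidean ball $\mathbb{B}(q;\rho_0) \subset \psi_c(D^s_P)$ with $\rho_0 = \rho_0(r) > 0$ independent of $s$, $c$, and $\Omega$. A direct computation in the spirit of Example \ref{ex1} (using $1-b^2 = s(2-s)$) handles the special case $c = b$, giving $\psi_b(D^s_P) \supset \mathbb{B}(0;\rho_1)$ for some $\rho_1 > 0$ independent of $s$. The passage from $c=b$ to general $c$ is the heart of the argument: one factors $\psi_c = (\psi_c \circ \psi_b^{-1}) \circ \psi_b$, uses that $\psi_c \circ \psi_b^{-1} \in \mathrm{Aut}(D_P)$ is a Kobayashi isometry of $D_P$, and invokes Lemma \ref{exhaustion-ellipsoid} together with the uniform comparability of the Kobayashi and Euclidean metrics on the compact set $K_r$ to convert transported Kobayashi balls back into uniform Euclidean ones.

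Granted this ball, the theorem follows by a sandwich argument. From $D^s_P \subset \Omega \subset D_P$ and $\psi_c \in \mathrm{Aut}(D_P)$,
\[
\mathbb{B}(q;\rho_0) \;\subset\; \psi_c(\Omega) \;\subset\; \psi_c(D_P) = D_P \;\subset\; \mathbb{B}(0;R_0),
\]
for some fixed $R_0 > 0$, with $|q| \le R_0$. The affine map $f(w) := (w-q)/(2R_0)\colon \psi_c(\Omega) \to \mathbb{B}^n$ satisfies $f(q)=0$ and $f(\mathbb{B}(q;\rho_0)) = \mathbb{B}(0;\rho_0/(2R_0))$, so biholomorphic invariance of the squeezing function gives
\[
\sigma_\Omega(p) \,=\, \sigma_{\psi_c(\Omega)}(q) \,\ge\, \rho_0/(2R_0) \,=:\, \gamma_0 > 0.
\]
The chief obstacle is the uniform ball estimate: as $|c|\to 1$ the map $\psi_c$ can distort Euclidean geometry severely near the weakly pseudoconvex boundary of $D_P$, and it is precisely the compactness of $K_r$ together with the Kobayashi-isometry property of $\mathrm{Aut}(D_P)$ that makes the estimate uniform in both $c$ and $s$.
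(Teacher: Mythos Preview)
Your overall strategy---apply the M\"obius-type automorphism to land $p$ on the compact slice $K_r=\{(w',0):P(w')\le r\}\Subset D_P$, and then bound the squeezing function via an explicit affine embedding---is the same as the paper's (which packages the last step as an appeal to \cite[Lemma~2.1]{NNC21}). Your algebraic verification that $P(q')<r$ is correct.

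The gap is in your ``main technical step''. The claim that there is a uniform $\rho_0>0$ with $\mathbb{B}(q;\rho_0)\subset\psi_c(D^s_P)$ is \emph{false}. Take $p=(0',c)$ with $c=1-2s+\epsilon$ (real, $\epsilon>0$ small); then $p\in D^{s,r}_P$ for every $r\in(0,1)$ since $|c-b|=s-\epsilon<s$, and $q=\psi_c(p)=(0',0)$. But the boundary point $(0',1-2s)\in\partial D^s_P$ is mapped by $\psi_c$ to
\[
\Bigl(0',\ \dfrac{(1-2s)-c}{1-c(1-2s)}\Bigr)=\Bigl(0',\ \dfrac{-\epsilon}{4s(1-s)-\epsilon(1-2s)}\Bigr),
\]
so $\mathrm{dist}\bigl(q,\partial\psi_c(D^s_P)\bigr)\to 0$ as $\epsilon\to 0$. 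The underlying reason is that $D^{s,r}_P$ and $D^s_P$ coincide on $\{z'=0\}$ (both cut out $|z_n-b|<s$ there), so points of $D^{s,r}_P$ can lie arbitrarily close to $\partial D^s_P$, and no automorphism of $D_P$ can manufacture a uniform gap between them. Your factorization $\psi_c=(\psi_c\circ\psi_b^{-1})\circ\psi_b$ does not repair this: the Kobayashi isometry $\psi_c\circ\psi_b^{-1}$ carries $\mathbb{B}(0;\rho_1)$ to a neighborhood of $(\psi_c\circ\psi_b^{-1})(0)=\psi_c\bigl((0',b)\bigr)$, which is \emph{not} the point $q=\psi_c(p)$ in general, so you obtain a ball around the wrong center.

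The paper sidesteps this by taking $q$ near $(0',1)$, so that $a=\mathrm{Re}(q_n)$ is close to $1$; then Lemma~\ref{exhaustion-ellipsoid} gives $\psi_a^{-1}(D^s_P)\to D_P$, hence $\psi_a^{-1}(\Omega)\supset\psi_a^{-1}(D^s_P)$ eventually contains a fixed neighborhood of $K_r$, and the affine-type bound from \cite[Lemma~2.1]{NNC21} applies. Your counterexample has $c\to 1-2s$ rather than $c\to 1$, so the exhaustion lemma is unavailable there; that regime needs a separate argument which your sketch does not provide.
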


\begin{remark} Notice that the point $p=(0',1)$ is $(P,s)$-extreme for each domain $\Omega_j$ (cf. \cite[Definition $1.1$]{NNC21} for the notion of $(P,s)$-extreme points) and the convergence of a sequence of points in $D_P^{s,r}$ to $p$ is exactly the $\Lambda$-nontangential convergence introduced in \cite[Definition $3.4$]{NN19}. 
\end{remark}

Now we consider the case that $\{a_j\}\subset \Omega\cap U$ converges $\Lambda$-tangentially to $p=0$ in the sense that for any $0<r<1$ there exists $j_{r}\in \mathbb N$ such that $a_{j}\not \in D_{s,r}$ for all $j\geq j_{r}$. Then, the following theorem shows that the squeezing function converges to $1$ provided  that all $\partial\Omega_j$ share a small neighborhood of the point $(0',1)$ with $\partial D_P$ to which the sequence of points converges $\Lambda$-tangentially. More precisely, we prove the following theorem.

\begin{theorem}\label{maintheorem10} Let $\{\Omega_j\}$ be a sequence of subdomains of $D_P$ such that $\Omega_j\cap U=D_P\cap U$, $j\geq 1$, for a fixed neighborhood $U$ of the origin in $\mathbb C^n$. Let $\{\eta_j\}\subset D_P\cap U$ be a sequence that  converges $\Lambda$-tangentially to $(0',1)$ in $D_P$. Then, $\lim_{j\to\infty}\sigma_{\Omega_j}(\eta_j)=1$.
\end{theorem}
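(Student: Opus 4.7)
The plan is to conjugate the weakly pseudoconvex, tangential convergence at $(0',1)$ into strongly pseudoconvex convergence through a Möbius-type automorphism of $D_P$, and then invoke a Pinchuk-style rescaling. Concretely, I would introduce
\[
\psi_j(z) = \Bigl(\tfrac{(1-|\eta_{jn}|^2)^{1/(2m_1)}}{(1-\bar\eta_{jn} z_n)^{1/m_1}}z_1,\ldots,\tfrac{(1-|\eta_{jn}|^2)^{1/(2m_{n-1})}}{(1-\bar\eta_{jn} z_n)^{1/m_{n-1}}}z_{n-1},\ \tfrac{z_n-\eta_{jn}}{1-\bar\eta_{jn} z_n}\Bigr),
\]
modeled on the map in Example \ref{ex1}. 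A direct computation using $\mathrm{wt}(K)=\mathrm{wt}(L)=1/2$ gives the clean identity $P(\psi_j(z)')=(1-|\eta_{jn}|^2)P(z')/|1-\bar\eta_{jn} z_n|^2$, which together with the disc Möbius identity shows $\psi_j\in\mathrm{Aut}(D_P)$. Put $w_j:=\psi_j(\eta_j)=(w_j',0)$, for which $P(w_j')=P(\eta_j')/(1-|\eta_{jn}|^2)$. The $\Lambda$-tangential hypothesis, applied in conjunction with $\eta_j\to(0',1)\in\partial D_P$, forces $P(w_j')\to 1$, so after extracting a subsequence $w_j\to w^*=(w^{*\prime},0)\in\partial D_P$ with $P(w^{*\prime})=1\ne0$. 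In particular $w^*$ lies off the exceptional locus $\{(0',e^{i\theta})\}$, so the WB-domain hypothesis yields that $w^*$ is a strongly pseudoconvex boundary point of $D_P$. By biholomorphic invariance $\sigma_{\Omega_j}(\eta_j)=\sigma_{\psi_j(\Omega_j)}(w_j)$, so it suffices to prove that the right-hand side tends to $1$.

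The next step is to identify $\psi_j(\Omega_j)$ with $D_P$ in a fixed neighborhood of $w^*$. Since $\Omega_j\cap U=D_P\cap U$, one has $\psi_j(\Omega_j)\cap\psi_j(U)=D_P\cap\psi_j(U)$, so the task reduces to the inclusion $V\subset\psi_j(U)$, equivalently $\psi_j^{-1}(V)\subset U$, for some small fixed neighborhood $V$ of $w^*$. Choose $V$ bounded away from $(0',-1)$. Because $\psi_j^{-1}$ is the Möbius-type automorphism attached to $-\eta_j$, and because $1+\bar\eta_{jn}\zeta_n$ stays bounded below on $\overline V$ while the scaling factors $(1-|\eta_{jn}|^2)^{1/(2m_k)}$ tend to $0$, the images $\psi_j^{-1}(\overline V)$ contract into arbitrarily small neighborhoods of $(0',1)$, and hence lie inside $U$ for all large $j$. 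Consequently $\psi_j(\Omega_j)\cap V=D_P\cap V$ whenever $j$ is large.

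Finally, apply a standard Pinchuk-style anisotropic rescaling at the strongly pseudoconvex boundary point $w^*$ of $D_P$. After a local holomorphic change of coordinates sending $w^*$ to the origin and straightening $\partial D_P$ to the Siegel normal form, apply the dilations $\Lambda_j(\zeta',\zeta_n)=(\zeta'/\sqrt{\delta_j},\zeta_n/\delta_j)$ with $\delta_j\asymp\mathrm{dist}(w_j,\partial D_P)$. Because $\psi_j(\Omega_j)\cap V=D_P\cap V$ and because $\Lambda_j$ sends the complement $\psi_j(\Omega_j)\setminus V$ off to infinity, on every fixed compactum the rescaled domains eventually coincide with the rescaling of $D_P$, whose kernel limit is the Siegel upper half-space $\mathbb H_n\cong\mathbb B^n$; meanwhile $\Lambda_j(w_j)$ converges to an interior point of $\mathbb H_n$. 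Biholomorphic invariance of $\sigma$ together with its lower semicontinuity under kernel convergence of bounded domains (compare \cite{DGF12,DFW14}) then gives $\sigma_{\psi_j(\Omega_j)}(w_j)\to\sigma_{\mathbb H_n}(\cdot)=1$, completing the proof. The most delicate point is unpacking the $\Lambda$-tangential hypothesis to derive $P(w_j')\to 1$, so that the normalized sequence $w_j$ genuinely exits to the boundary of $D_P$ rather than accumulating at an interior point; once this is in hand, the localization near $w^*$ and the scaling argument are fairly standard.
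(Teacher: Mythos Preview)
Your overall strategy matches the paper's: conjugate by the M\"obius-type automorphism $\psi_j\in\mathrm{Aut}(D_P)$ so that the normalized points $w_j=\psi_j(\eta_j)$ lie on $\{z_n=0\}$ and, via the $\Lambda$-tangential hypothesis, satisfy $P(w_j')\to1$, hence accumulate at a strongly pseudoconvex boundary point $w^*$ of $D_P$; then use that $\psi_j(\Omega_j)$ agrees with $D_P$ near $w^*$. The paper does exactly this (after a preliminary rotation making $\eta_{jn}$ real), and your identification of $P(w_j')=P(\eta_j')/(1-|\eta_{jn}|^2)\to1$ is the same computation the paper carries out.

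The gap is in your last paragraph. The map $\Lambda_j\circ\Phi$ is only defined on the local chart $V$, so it does not furnish a global injective holomorphic map $\psi_j(\Omega_j)\to\mathbb B^n$; what kernel-converges to $\mathbb H_n$ is $\Lambda_j\bigl(\Phi(\psi_j(\Omega_j)\cap V)\bigr)$, and its squeezing function at $\Lambda_j(\Phi(w_j))$ is \emph{not} $\sigma_{\psi_j(\Omega_j)}(w_j)$, nor is there any general inequality relating the two. There is also no blanket ``lower semicontinuity of $\sigma$ under kernel convergence'' in \cite{DGF12} or \cite{DFW14} that would bridge this. The paper avoids the issue by establishing, via the second clause of Lemma~\ref{exhaustion-ellipsoid}, a much stronger localization---namely $\psi_j^{-1}(\overline{\Omega_j})$ coincides with $\overline{D_P}$ everywhere outside an arbitrarily small ball about $(0',-1)$---and then invoking the known fact $\sigma_{D_P}(b_j)\to1$ at the strongly pseudoconvex point together with \cite[Theorem~3.1]{KZ16}, which supplies the requisite global embeddings uniformly for the family $\psi_j^{-1}(\Omega_j)$. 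Your contraction estimate for $\psi_j^{-1}(\overline V)$ in fact extends verbatim to any compact $K\subset\overline{D_P}\setminus\{(0',-1)\}$ and would recover the paper's global localization; once that is in hand, the correct finish is to cite the strongly pseudoconvex boundary behaviour of the squeezing function rather than to redo an (only locally defined) Pinchuk scaling.
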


The organization of this paper is as follows: In Sections \ref{technical-section}, we recall a definition and results needed later. Next, in Section \ref{linearly-convex} we give a proof of Theorem \ref{main thm1}. Finally, the proofs of Theorem \ref{maintheorem2} and Theorem \ref{maintheorem10} are given in Section \ref{squeezing}.

\section{Several technical lemmas}\label{technical-section}
First of all, we recall the following definition (see \cite{GK} or \cite{DN09}). 
\begin{define} Let $\{\Omega_i\}_{i=1}^\infty$ be a sequence of open sets in a complex manifold $M$ and $\Omega_0 $ be an open set of $M$. The sequence $\{\Omega_i\}_{i=1}^\infty$ is said to converge to $\Omega_0 $ (written $\lim\Omega_i=\Omega_0$) if and only if 
	\begin{enumerate}
		\item[(i)] For any compact set $K\subset \Omega_0,$ there is an $i_0=i_0(K)$ such that $i\geq i_0$ implies that $K\subset \Omega_i$; and 
		\item[(ii)] If $K$ is a compact set which is contained in $\Omega_i$ for all sufficiently large $i,$ then  $K\subset \Omega_0$.
	\end{enumerate}  
\end{define}

Next, to give proofs of Theorem \ref{maintheorem2} and Theorem \ref{maintheorem10}, we need the following lemma which is a generalization of  \cite[Lemma $2.5$]{Liu18}.
\begin{lemma}\label{exhaustion-ellipsoid} Let $\{\psi_j\}\subset \mathrm{Aut}(D_P)$ be a sequence of automorphisms 
	$$      
	\psi_j(z,w)=\left(\dfrac{\sqrt[2m_1]{1-|a_j|^2}}{\sqrt[m_1]{1+\bar a_j z_n}} z_1,\ldots, \dfrac{\sqrt[2m_{n-1}]{1-|a_j|^2}}{\sqrt[m_{n-1}]{1+\bar a_j z_n}} z_{n-1}, \dfrac{z_n+a_j}{1+\bar a_j z_n}\right),
	$$
where $a_j\in (0,1)$ with $\lim a_j=1$. Then, for any $s\in (0,1)$ we have $\psi_j^{-1}(D_P^s)\to D_P$ as $j\to \infty$. In addition, for any $0<\epsilon<1/2$ and any neighborhood $U$ of $(0',1)$ in $\mathbb C^n$ there exists an integer $j_0\geq 1$  such that $\overline{D_P}\setminus B((0',-1),\epsilon))\subset \psi_j^{-1}(\overline{D_P}\cap U)$ for all $j\geq j_0$.

\end{lemma}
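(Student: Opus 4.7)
The plan is to convert the condition $\psi_j(z) \in D_P^s$ into an algebraic inequality on $z$, extract a vanishing factor $(1-a_j)$ from it, and thereby identify the limiting set with $D_P$; the second claim will follow from uniform convergence of $\psi_j$ to the constant $(0',1)$ outside any fixed neighborhood of $(0',-1)$. The main technical hurdle is spotting that factor $(1-a_j)$: each coefficient becomes divisible by $(1-a_j)$ only after the specific substitution $b = 1-s$, and without this cancellation $F_j$ tends pointwise to $0$ and no meaningful limit set is visible.

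Setting $w = \psi_j(z)$, the weighted homogeneity of $P$ (every monomial has total weight $1$) gives $P(w') = \frac{1-|a_j|^2}{|1+\bar a_j z_n|^2}P(z')$. Clearing denominators in $|w_n - b|^2 + sP(w') < s^2$ then produces
\[
F_j(z) := |(1-ba_j)z_n + (a_j - b)|^2 + s(1 - a_j^2)P(z') - s^2|1 + a_j z_n|^2 < 0
\]
(using that $a_j$ is real). After expanding and substituting $b = 1-s$, the coefficients of $|z_n|^2$, $\Re(z_n)$, $P(z')$, and the constant become $(1-a_j)[1 + a_j(1-2b)]$, $-2b(1-a_j)^2$, $s(1-a_j)(1+a_j)$, and $-(1-a_j)(a_j+1-2b)$ respectively; each carries $(1-a_j)$, so $F_j = (1-a_j)\,G_j$, where
\[
G_j(z) = [1 + a_j(1-2b)]|z_n|^2 - 2b(1-a_j)\Re(z_n) + s(1+a_j)P(z') - (a_j + 1 - 2b),
\]
and $G_j \to 2s\bigl(|z_n|^2 + P(z') - 1\bigr)$ uniformly on compact subsets of $\mathbb C^n$ as $a_j \to 1^-$.

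To derive $\psi_j^{-1}(D_P^s) \to D_P$ from this: a brief elementary estimate gives $D_P^s \subset D_P$, so $\Omega_j := \psi_j^{-1}(D_P^s) \subset D_P$ for every $j$, which supplies condition (ii) of the convergence definition for free; condition (i) follows from uniform convergence of $G_j$, since on any compact $K \subset D_P$ the limit is bounded above by some $-\delta < 0$, hence $G_j < 0$ on $K$ for $j$ large, i.e.\ $K \subset \Omega_j$. For the second assertion, on the compact set $\overline{D_P}\setminus B((0',-1),\epsilon)$ the denominator $|1+a_j z_n|$ stays bounded away from $0$ uniformly in $j$ while $(1-|a_j|^2)^{1/(2m_k)} \to 0$, so the first $n-1$ components of $\psi_j(z)$ tend to $0$ uniformly and $(z_n+a_j)/(1+a_j z_n) \to 1$ uniformly, giving $\psi_j(z) \to (0',1)$ uniformly on that set; since $\psi_j$ preserves $\overline{D_P}$, the image lies in $\overline{D_P}\cap U$ for every $j \geq j_0$.
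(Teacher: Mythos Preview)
Your proof is correct and follows essentially the same computational route as the paper: both expand the inequality defining $\psi_j^{-1}(D_P^s)$ by substituting $w=\psi_j(z)$, use the weighted homogeneity $P(w')=\dfrac{1-|a_j|^2}{|1+a_jz_n|^2}P(z')$, clear denominators, and observe that the resulting inequality tends to $|z_n|^2+P(z')<1$. The only cosmetic difference is the normalization: the paper divides through by the coefficient of $|z_n|^2$ and completes the square in $z_n$ to display $\psi_j^{-1}(D_P^s)$ explicitly as the translated ellipsoid
\[
\left|z_n-\frac{b(1-a_j)}{1+a_j-2a_jb}\right|^2+ \frac{(1-b)(1+a_j)}{1+a_j-2a_jb}\,P(z')< \frac{1+a_j-2b}{1+a_j-2a_jb}+\left|\frac{b(1-a_j)}{1+a_j-2a_jb}\right|^2,
\]
whereas you factor out $(1-a_j)$ from $F_j$ to obtain $G_j$. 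These are the same manipulation (your factor $(1-a_j)(1+a_j-2a_jb)$ is exactly the $|z_n|^2$-coefficient the paper divides by), and the limits of the individual coefficients agree.

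Your write-up is in fact more complete than the paper's in two respects. First, you explicitly verify both clauses of the definition of $\lim\Omega_j=\Omega_0$: the paper simply asserts ``This yields $\psi_j^{-1}(D_P^s)\to D_P$'' after computing the limits, while you argue (i) from uniform convergence of $G_j$ on compacta and (ii) from the containment $\psi_j^{-1}(D_P^s)\subset \psi_j^{-1}(D_P)=D_P$. Second, the paper's proof does not address the ``In addition'' clause at all; your argument that $\psi_j\to(0',1)$ uniformly on $\overline{D_P}\setminus B((0',-1),\epsilon)$ (using that $|1+z_n|$ is bounded below there, since in $\overline{D_P}$ one has $z_n\to -1$ only when $z'\to 0'$) is the natural one and fills that gap.
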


\begin{proof}
	A computation shows that
	\begin{align*}  
		&\hskip 0.5cm\left|\dfrac{z_n+a_j}{1+a_jz_n}-b\right|^2+ s P\left(\dfrac{\sqrt[2m_1]{1-|a_j|^2}}{\sqrt[m_1]{1+\bar a_j w}} z_1,\ldots, \dfrac{\sqrt[2m_{n-1}]{1-|a_j|^2}}{\sqrt[m_{n-1}]{1+\bar a_j z_n}} z_{n-1} \right) <s^2\\
		&\Leftrightarrow \left|\dfrac{z_n+a_j}{1+a_jz_n}-b\right|^2+ s \dfrac{1-|a_j|^2}{|1+a_jz_n|^2}P(z)    <s^2\\
		&\Leftrightarrow \left|w-\frac{b(1-a_j)}{1+a_j-2a_jb}\right|^2+ \dfrac{(1-b)(1+a_j)}{1+a_j-2a_jb} P(z)< \dfrac{1+a_j-2b}{1+a_j-2a_jb}+\left|\frac{b(1-a_j)}{1+a_j-2a_jb}\right|^2.
	\end{align*}    
	Moreover, by a straightforward calculation, one has that
	\begin{align*}
		\lim\limits_{j\to\infty} \frac{b(1-a_j)}{1+a_j-2a_jb}=0,\; \lim\limits_{j\to\infty}  \dfrac{(1-b)(1+a_j)}{1+a_j-2a_jb}=1,\; \lim\limits_{j\to\infty}  \dfrac{1+a_j-2b}{1+a_j-2a_jb}=1.
	\end{align*}
	This yields $\psi_j^{-1}(D_P^s)\to D_P$ as $j\to \infty$.
\end{proof}

\section{Squeezing function for linearly convex domains}\label{linearly-convex}

Throughout this section, the domain $\Omega\subset \mathbb C^n$ and the boundary point $\xi_0\in \partial \Omega $ are assumed to satisfy the hypothesis of Theorem \ref{main thm1}, namely $\partial \Omega $ is linearly convex,  of finite type $2m$ near a point $\xi_0$ of $\partial \Omega$. We may also assume that $\xi_0=0$. There exists a neighbourhood $U$ of $\xi_0=0 $ in $\mathbb C^{n}$ such that $\Omega \cap U$ is linearly convex and is defined by a smooth function 
$$
\rho(z',z_n)=\mathrm{Re}(z_n)+h\big(\mathrm{Im}(z_n),z'\big),
$$
where $h $ is a function of class $C^\infty$. We may also assume that there exists a real positive number $\epsilon_0$ such that for every $-\epsilon_0<\epsilon<\epsilon_0$, the level sets $\{\rho(z)=\epsilon\}$ are linearly convex. 

For  each $\epsilon\in (0,\epsilon_0/2), \ \eta\in \Omega\cap U$ with $|\rho(\eta)|<\epsilon_0/2$ and each unit vector $v\in \mathbb S^{n-1}:=\{v\in \mathbb C^n\colon |v|=1\},$ we set
$$\tau(\eta,v, \epsilon):=\sup\{r>0:\rho(\eta+\lambda v)-\rho(\eta)<\epsilon\ \text{for all } \  \lambda\in \mathbb C \ \text{with}\ |\lambda|<r\}. $$
Then, it is easy to see that $\tau(\eta,v,\epsilon)$ is the distance from $\eta$ to $S_{\eta,\epsilon}:=\{\rho(z)=\rho(\eta)+\epsilon\}$ along the complex line $\{\eta+\lambda v\colon  \lambda\in \mathbb C\}$.

To every point $\eta\in \Omega\cap U $ and every sufficiently small positive constant $\epsilon$ we associate
\begin{enumerate}
\item[(1)] A holomorphic coordinate system $(z_1,z_2,\ldots,z_n)$ centered at $\eta$ and preserving orthogonality,  
\item[(2)] Points $p_1,p_2,\ldots, p_n$ on the hypersurface $S_{\eta,\epsilon}$ and,
\item[(2)] Positive  real numbers $\tau_1(\eta,\epsilon),\tau_2(\eta,\epsilon),\ldots,\tau_n(\eta,\epsilon)$.
\end{enumerate}
The construction proceeds as follows. We first set 
$$
e_n:=\frac{\nabla \rho(\eta)}{|\nabla \rho(\eta)|}\ \text{and} \ \tau_n(\eta,\epsilon):=\tau (\eta,e_n,\epsilon).
$$
Working with sufficiently small $\epsilon$, there exists a unique point $p_n$ in $S_{\eta,\epsilon}$ where this distance is achieved. Choose a parameterization of the complex line from $\eta$ to $p_n$ such that  $z_n(0)=\eta$ and $p_n$ lies on the positive $\mathrm{Re}(z_n)$ axis. By the choice of real axis for $z_n$, we have $\frac{\partial r}{\partial x_n}(\eta)=1$ and thus, if $U$ is small enough,
    $$\frac{\partial r}{\partial x_n}(z)\approx 1 \ \text{ for  all } \ z\in U.$$

 We also have 
\begin{equation}\label{eq}
\tau_n(\eta,\epsilon)\approx \epsilon,
\end{equation}
where the constant is independent of $\eta$ and $\epsilon$. Now consider the orthogonal complement $H_n$ of the span of the coordinate $z_n$ in $\mathbb C^n.$  For any $\gamma\in H_n\cap \mathbb S^{n-1}$, compute $\tau(\eta,\gamma,\epsilon)$. Because of the assumption of finite type, the largest such distance is finite and is achieved at a vector $e_1\in H_1\cap \mathbb S^{n-1} $. Set $\tau_1(\eta,\epsilon):= \tau(\eta,e_1,\epsilon).$ Let $p_1\in S_{\eta,\epsilon}$ be a point such that $p_1=\eta+\tau_1(\eta,\epsilon) e_1$. The coordinate $z_1$ is defined by parameterizing the complex line from $\eta$ to $p_1$ in such a  way that $z_1(0)=\eta$ and $p_1$ lies on the positive $\mathrm{Re}(z_1)$ axis. For the next step, define $H_1$ as the orthogonal complement of the span of $z_1$ and $z_n$ and repeat the above construction. Continuing this process, we obtain $n$ coordinate functions $z_k$, vectors $e_k$,  the numbers $\tau_k(\eta,\epsilon)$ and the distinguished points $p_k \ (1\leq k\leq n)$. Let $z_k=x_k+iy_k\ (1\leq k\leq n)$ denote the underlying real coordinates. 

We assume that $\xi_0$ is an accumulating point for a sequence of automorphisms of $\Omega$. Let $\{\eta_j\}\subset \Omega$ be a sequence converging to $\xi_0$. Moreover, we may assume that $\eta_j\in \Omega\cap U$ for all $j$. Let us set $\epsilon_j:=-\rho(\eta_j)$ for all $j$. Then, by argument as above, we construct the new coordinates $(z^j_1,\ldots , z^j_n)$, the positive numbers $\tau_{j,1},\ldots,\tau_{j,n}$, and the points $p_1^j,\ldots,p_n^j$ associated with $\eta_j$ and $\epsilon_j$.

The change of coordinates from the canonical system to the system $(z^j_1,\ldots , z^j_n)$ is the composition of a translation $T_j$ and of a unitary transform $A_j$. In addition, we may assume that $(A_j\circ T_j)^{-1}$ is defined in a fixed neighborhood of the origin and thus the corresponding defining function $\rho_j$ is defined by
$$
\rho_j:= \rho\circ ( A_j\circ T_j)^{-1},
$$
which is given in a fixed neighborhood of $0$ by
\begin{equation*}
\begin{split}
\rho_j(z)=-\epsilon_j+ \mathrm{Re}(\sum_{k=1}^na_k^j z_k)&+\sum_{2\leq |\alpha|+|\beta|\leq 2m} C^j_{\alpha\beta} {z'}^\alpha {z'}^\beta+O(|z|^{2m+1}),
\end{split}
\end{equation*}
where $\alpha=(\alpha_1,\ldots ,\alpha_{n-1}), \ |\alpha|= \alpha_1+\cdots+\alpha_{n-1}$ and ${z'}^\alpha=z_1^{\alpha_1}.\ldots.z_{n-1}^{\alpha_{n-1}}$. We note that $O(|z|^{2m+1})$ is independent of $j$.

Let $\rho\circ A$ be the limit of $\rho_j$ when $j$ goes to infinity, where $A$ is a unitary transform and this convergence is $\mathcal C^\infty $ on a fixed compact neighborhood of $\xi_0$. Then, for every $j $ less than or equal to $n$ and for every multi-index $\alpha$ and $\beta$ satisfying $2\leq |\alpha|+|\beta|\leq 2m$, there exist two complex numbers $a_j$ and $C_{\alpha\beta}$ such that 
$$
\lim_{j\to\infty} a^j_k=a_k\ \text{and}\ \lim_{j\to\infty} C^j_{\alpha\beta}=C_{\alpha\beta}.
$$
Now let us consider the dilation 
$$\Lambda_j(z):=(\tau_{j,1}z_1,\ldots,\tau_{j,n}z_n)$$     
and the function
$$\tilde \rho_j=\frac{1}{\epsilon_j}\rho_j\circ \Lambda_j.$$
Therefore, the defining function $\tilde \rho_j$ has the following form
\begin{equation*}
\begin{split}
\tilde \rho_j(z)=-1 +\frac{1}{\epsilon_j} \mathrm{Re} \big (\sum_{j=1}^n  a_j^j \tau_{j,j}z_j\big )&+\frac{1}{\epsilon_j}\sum_{2\leq |\alpha|+|\beta|\leq 2m} C^j_{\alpha\beta}\tau_j^{\alpha+\beta} {z'}^\alpha {z'}^\beta+O((\epsilon_j)^{1/2m}|z|^{2m+1}),
\end{split}
\end{equation*}
where $\tau_j^{\alpha+\beta}=\tau_{j,1}^{\alpha_1+\beta_1}.\ldots.\tau_{j,n-1}^{\alpha_{n-1}+\beta_{n-1}}$.
Furthermore, it follows from \cite[Prop.3.1]{Ni09} that the functions $\tilde \rho_j$ are smooth and plurisubharmonic, and after taking a subsequence, we may assume that $\{\tilde \rho_j\}$ that converges uniformly on compacta of $\mathbb C^n$ to a smooth plurisubharmonic function $\tilde \rho$ of the form
$$
\tilde \rho(z)=-1+ \mathrm{Re} \big(\sum_{k=1}^n b_k z_k\big)+P(z'),
$$
where $P$ is a plurisubharmonic polynomial of degree less than or equal to $2m$. 

In what follows, let us denote by $\Gamma_j:=\Lambda^{-1}_j\circ A_j\circ T_j$ for all $j$. Then, one can deduce that $\{\Gamma_j(\Omega\cap U)\}$ converges to the following model 
$$
\widetilde{M}_P:=\left\{z\in \mathbb C^n\colon \tilde \rho(z)=-1+ \mathrm{Re} \big(\sum_{k=1}^n b_k z_k\big)+P(z')<0\right\}, 
$$
which is clearly biholomorphically equivalent to 
$$
M_P:=\left\{z\in \mathbb C^n\colon \hat \rho(z):=\mathrm{Re}(z_n)+P(z')<0\right\}.
$$

Let  us consider a sequence of the biholomorphisms $F_j \colon f_j(\Omega\cap U) \to \Gamma_j(\Omega\cap U)$ defined by $F_j=\Gamma_j\circ f_j^{-1}$. Since $F_j(0)=0\in \widetilde{M}_P$, it follows that our sequence $\{F_j\}$ is not compactly divergence. Moreover, the normality of $\{F_j\}$ is ensured by \cite[Lemma 4.1]{Ni09}.

Now we are ready to prove Theorem \ref{main thm1}.
\begin{proof}[Proof of Theorem \ref{main thm1}]
Let $\{\eta_j\}\subset\Omega$ be a sequence given in Theorem \ref{main thm1}, that is, $\lim\limits_{j\to\infty}\eta_j=\xi_0$ and $\lim\limits_{j\to\infty}\sigma_{\Omega}(\eta_j)=1$. Firstly, let us set $\delta _j=2(1-\sigma_{\Omega}(\eta_j))$ for all $j$. Then by our assumption, for each $j$, there exists an injective holomorphic map $f_j:\Omega\to \mathbb{B}^n $ such that $f_j(\eta_j)=(0',0)$ and $\mathbb{B}(0;1-\delta _j)\subset f_j(\Omega)$. Then by \cite[Proposition $2.2$]{DN09} and the hypothesis of Theorem \ref{main thm1}, without loss of generality we may assume that for each compact subset $K\Subset \mathbb B^n$ and each neighborhood $U$ of $\xi_0$, there exists an integer $j_0$ such that $f_j^{-1}(K)\subset \Omega\cap U$ for all $j\geq j_0$, i.e. $f_j(\Omega \cap U)$ converges to $\mathbb B^n$.

Next, it follows from  \cite[Lemma 4.1]{Ni09} that the sequence $\Gamma_j\circ f_j^{-1} \colon f_j(\Omega \cap U) \to  \Gamma_j(\Omega \cap U) $ is normal and its limit is a holomorphic mapping from $\mathbb B^n$ to $\widetilde M_P$. Moreover, by Montel's theorem the sequence $ f_j\circ \Gamma_j^{-1} \colon \Gamma_j(\Omega \cap U)\to   f_j(\Omega \cap U) \subset \mathbb B^n$ is also normal. In addition, our the sequence $\{\Gamma_j\circ f_j^{-1}\}$ is not compactly divergent since $\Gamma_j\circ f_j^{-1}(0,0')=(0,0')$. Then by \cite[Proposition $2.1$]{DN09}, after taking some subsequence of $\{\Gamma_j\circ f^{-1}_{j}\}$, we may assume that such a subsequence converges uniformly on every compact subset of $\mathbb B^n$ to a biholomorphism $F$ from $\mathbb B^n$ onto $\widetilde M_P$, which is clearly equivalent to $M_P$. 

On the other hand, by \cite[Theorem $1.1$]{Ni09} $\Omega$ is also biholomorphically equivalent to $M_P$, and hence $\Omega$ is biholomorphically equivalent to $\mathbb B^n$. Therefore, $\partial\Omega$ is strongly pseudoconvex at $\xi_0$ ($\xi_0$ is of the D'Angelo type $2$), which ends our proof. 
\end{proof}

\section{Proofs of  Theorem \ref{maintheorem2} and Theorem \ref{maintheorem10}}\label{squeezing}

This section is devoted to proofs of Theorem \ref{maintheorem2} and Theorem \ref{maintheorem10}.

\begin{proof}[Proof of Theorem \ref{maintheorem2}]
Let $q=(q',q_n)\in D^{s,r}_P$ near $p=(0',1)$. By the invariance of $D_{s,r}, D_P$ under the rotation $(z',z_n)\mapsto (z',e^{i\theta}z_n)$ for $\theta\in \mathbb R$ satisfying that $\mathrm{Im}(e^{i\theta}  q_n)=0$, without loss of generality we may assume that $\mathrm{Im}(q_{n})=0$ for every $j\geq 1$. 
	
	We now consider the automorphism $\psi_a \in \mathrm{Aut}(D_P)$, given by
	$$	
	\psi_a(z)=\left(\dfrac{\sqrt[2m_1]{1-|a|^2}}{\sqrt[m_1]{1+\bar a z_n}} z_1,\ldots, \dfrac{\sqrt[2m_{n-1}]{1-|a|^2}}{\sqrt[m_{n-1}]{1+\bar a z_n}} z_{n-1}, \dfrac{z_n+a}{1+\bar a z_n} \right),
	$$
	where $a=\mathrm{Re}(q_{n})=q_{n}\in (0,1)$. Then, Lemma \ref{exhaustion-ellipsoid} yields
	\begin{equation*}
		\begin{split}
			\lim\limits_{a\to 1}\psi_a^{-1}(D_{s,r})= D_{P,r} ; \; \lim\limits_{a\to 1}\psi_a^{-1}(\Omega)= D_P,
		\end{split}
	\end{equation*}
	where $\displaystyle D_{P,r}:=D_{P/r}=\Big\{z\in \mathbb C^{n}\colon |z_n|^2+ \frac{1}{r} P(z')<1\Big\}$. Moreover, we have that $\psi_a^{-1}(q)=\Big(\dfrac{q_{1}}{\lambda^{1/2m_1}},\ldots, \dfrac{q_{n-1}}{\lambda^{1/2m_{n-1}}} ,0\Big)\in D_{P,r}\cap \{z_n=0\}$, where $\lambda=1-|a|^2$ and $D_{P,r}\cap \{z_n=0\}\Subset D_P\cap \{z_n=0\}$. Therefore, by Lemma $2.1$ in \cite{NNC21} and again by the invariance of the squeezing function under biholomorphisms, we conclude that 
	$$
	\sigma_\Omega(q)=\sigma_{\psi_a^{-1}(\Omega)}(\psi_a^{-1}(q))>\delta/d>0, \;\forall\; q\in E^r\cap B(0;\epsilon_0),
	$$
	where $d$ denotes the diameter of $D_P$ and $\delta:=\mathrm{dist}(Z_\rho(P), Z_1(P))/2$ with $Z_\rho(P)=\{z'\in \mathbb C^{n-1}\colon P(z')=r\}$.
\end{proof}

\begin{proof}[Proof of Theorem \ref{maintheorem2}]
	For each $j\geq1$, choose $\theta_j\in \mathbb R$ such that that $\mathrm{Im}(e^{i\theta_j}\eta_{jn})=0$. Since $\mathrm{Im}(\eta_{jn})\to 0$ as $j\to\infty$, one has that $\theta_j\to 0$ as $j\to \infty$. Moreover, by shrinking $U$ if necessary we may also assume that $\rho_{\theta_j}(\Omega_j)\cap U=D_P\cap U$ for all $j\geq 1$. Therefore, by the invariance of $D_P$ under the rotation $\rho_{\theta_j}\colon (z',z_n)\mapsto (z',e^{i\theta_j}z_n)$ for $j\geq 1$, without loss of generality we may assume that $\mathrm{Im}(\eta_{jn})=0$ for every $j\geq 1$ and $\{\Omega_j\}$ is a sequence of subdomains of $D_P$ such that $\Omega_j\cap U=D_P\cap U$ for all $j\geq 1$.

We now consider the sequence of automorphisms $\{\psi_j\}\subset \mathrm{Aut}(D_P)$, given by
$$      
\psi_j(z)=\left(\dfrac{\sqrt[2m_1]{1-|a_j|^2}}{\sqrt[m_1]{1+\bar a_j z_n}} z_1,\ldots, \dfrac{\sqrt[2m_{n-1}]{1-|a_j|^2}}{\sqrt[m_{n-1}]{1+\bar a_j z_n}} z_{n-1}, \dfrac{z_n+a_j}{1+\bar a_j z_n} \right),
$$
where $a_j=\mathrm{Re}(\eta_{jn})=\eta_{jn}\in\mathbb R$ for all $j\geq 1$.

Let us set $b_j=\psi_j^{-1}(\eta_j)$ for all $j\geq 1$. Then, a straightforward computation shows that 
$$
b_j=\psi_j^{-1}(\eta_j)=\Big(\dfrac{\eta_{j1}}{\lambda_j^{1/2m_1}},\ldots, \dfrac{\eta_{j(n-1)}}{\lambda_j^{1/2m_{n-1}}} ,0\Big)\in D_{P}\cap \{z_n=0\}, 
$$
where $\lambda_j=1-|a_j|^2$ for all $j\geq 1$.

Since $\{\eta_j\}$ converges $\Lambda$-tangentially to $(0',1)$, it follows that there exists a sequence $\{\rho_j\}\subset (0,1)$ with $\rho_j\to 1$ as $j\to\infty$ such that
$$
|\eta_{jn}-1-s|^2+\dfrac{s}{\rho_j}P(\eta_j')>s^2, \;\forall j\geq 1.
$$
This implies that
\begin{equation*}
	\begin{split}
		P(b_j')&=\dfrac{1}{\lambda_j} P(\eta_j')\geq \dfrac{2\rho_j(1-a_j)}{1-a_j^2}-\dfrac{\rho_j}{s}\dfrac{|1-a_j|^2}{1-a_j^2}\\
		&\geq \dfrac{2\rho_j}{1+a_j}-\dfrac{\rho_j}{s}\dfrac{(1-a_j)}{1+a_j}
	\end{split}
\end{equation*}
for all $j\geq 1$. Therefore, we obtain that $P(b_j')\to 1$ as $j\to \infty$, and hence by passing to a subsequence if necessary, we may assume that $\psi_j^{-1}(\eta_j)$ converges to some strongly pseudoconvex  boundary point $p\in \partial D_P\cap\{z_n=0\}$. Since $a_j\to 1$ as $j\to\infty$, Lemma \ref{exhaustion-ellipsoid} yields
\begin{equation*}
	\begin{split}
		\lim\limits_{j\to\infty}\psi_j^{-1}(\Omega_j) =\lim\limits_{j\to\infty}\psi_j^{-1}(\Omega_j\cap U)=\lim\limits_{j\to\infty}\psi_j^{-1}(D_P\cap U)= D_P.
	\end{split}
\end{equation*}
In addition, for any $\epsilon>0$ sufficiently small there exists $j_0\geq 1$ such that
 $$
 \psi_j^{-1} (\overline{\Omega_j})\setminus B((0',-1), \epsilon)=\overline{D_P}\setminus B((0',-1), \epsilon) 
 $$
  for any $j\geq j_0$. Hence, since $\sigma_{D_P}(b_j)\to 1$ as $j\to \infty$ and by Theorem $3.1$ in \cite{KZ16}, one concludes that $\sigma_{\Omega_j}(\eta_j)=\sigma_{\psi_j^{-1}(\Omega_j)}(b_j) \to 1$ as $j\to \infty$.
\end{proof}

We close this section with an example, which is a generalization of Example \ref{ex1}.
\begin{example}\label{ex2}
Fix positive integers $m_1,\ldots, m_{n-1}$ and denote by $\Lambda:=(1/m_1,\ldots,1/m_{n-1})$. Let us consider a general ellipsoid $D_P$ in $\mathbb C^n\;(n\geq2)$, defined  by
\begin{equation*}
\begin{split}
D_P :=\{(z',z_n)\in \mathbb C^n\colon |z_n|^2+P(z')<1\},
\end{split}
\end{equation*}
where $P(z')$ is a $(1/m_1,\ldots,1/m_{n-1})$-homogeneous polynomial given by 
\begin{equation*}\label{series expression}
P(z')=\sum_{wt(K)=wt(L)=1/2} a_{KL} {z'}^K  \bar{z'}^L,
\end{equation*}
where $a_{KL}\in \mathbb C$ with $a_{KL}=\bar{a}_{LK}$, satisfying that $P(z')>0$ whenever $z'\ne 0$. Suppose that the domain $D_P$ is a WB-domain, i.e.,  $\partial D_P$ is strongly pseudoconvex at every boundary point outside the set $\{(0',e^{i\theta})\colon \theta\in \mathbb R\}$ (cf. \cite{AGK16}).

 Now let us denote by $\rho(z):=|z_n|^2-1+P(z')$ a local defining function for $D_P$ and consider a sequence $\{a_j=(a_j',a_{jn})\}\subset D_P$ which converges $\Lambda$-tangentially to $p:=(0',1)$. Since $D_P$ is invariant under the map $z' \mapsto z'; z_n\mapsto e^{i\theta} z_n$ and $\sigma_{D_P}$ is invariant under biholomorphisms, we may assume that $\mathrm{Im}(a_{jn})=0$ for all $j$. Since $\mathrm{dist}(a_j, \partial D_P)\approx -\rho(a_j)\approx 1-|a_{jn}|^2-P(a_j')$ and $\{a_j\}$ converges $\Lambda$-tangentially to $p$, it follows that $P(a_j')\geq \; c_j \; \mathrm{dist}(a_j, \partial D_P)$ for some sequence $\{c_j\}\subset \mathbb R$ with $0<c_j\to +\infty$. This implies that $P(a_j')\geq c_j'(1-|a_{jn}|^2-P(a_j'))$ for some sequence $\{c_j'\}\subset \mathbb R$ with $0<c_j'\to +\infty$ and hence
$$
P(a_j')\geq \dfrac{c_j'}{1+c_j'}(1-|a_{jn}|^2), \forall j\geq 1.
$$

Let us denote by $\psi_j$ the automorphism of $D_P$, given by
$$
\psi_j(z)=\left( \frac{(1-|a_{jn}|^2)^{1/2m_1}}{(1-\bar{a}_{jn}z_n)^{1/m_1}} z_1,\ldots,  \frac{(1-|a_{jn}|^2)^{1/2m_{n-1}}}{(1-\bar{a}_{jn}z_n)^{1/m_{n-1}}} z_{n-1}, \frac{z_n-a_{jn}}{1-\bar{a}_{jn} z_n}\right),
$$
and hence $\psi_j(a_j)=(b_j,0)$, where 
$$
b_j=  \left( \frac{a_{j1}}{(1-|a_{jn}|^2)^{1/2m_1}} ,\ldots, \frac{a_{j (n-1)}}{(1-|a_{jn}|^2)^{1/2m_{n-1}}}\right). 
$$
Thanks to the boundedness of  $\{b_j\}$, without loss of generality we may assume that $b_j\to b\in \mathbb C^{n-1}$ as $j\to \infty$. In addition, we have that $P(b_j)=\dfrac{1}{1-|a_{jn}|^2}P(a_j')\geq \dfrac{c_j'}{1+c_j'}, \forall j\geq 1$.
Therefore, we arrive at the situation  $b_j\to b$ with $P(b)=1$ and thus $\psi_j(a_j)$ converges to the strongly pseudoconvex boundary point $(b,0)$ of $\partial D_P$, which implies that $\sigma_{D_P}(a_j)=\sigma_{D_P}(\psi_j(a_j)) \to 1$ as $j\to \infty$ even the boundary point $p$ is weakly pseudoconvex. 
\end{example}

\begin{acknowledgement}Part of this work was done while the first author was visiting the Vietnam Institute for Advanced Study in Mathematics (VIASM). He would like to thank the VIASM for financial support and hospitality. 
\end{acknowledgement}

\bibliographystyle{plain}

\end{document}